\newcommand{\bburl}[1]{\textcolor{blue}{\url{#1}}}
\newtheorem{thm}{Theorem}[section]
\newtheorem{lem}[thm]{Lemma}
\newtheorem{prop}[thm]{Proposition}
\newtheorem{prob}[thm]{Problem}
\theoremstyle{definition}
\theoremstyle{definition}
\theoremstyle{remark}
\newtheorem{rem}[thm]{Remark}
\newcommand\be{\begin{equation}}
\newcommand\ee{\end{equation}}
\newcommand\bee{\begin{equation*}}
\newcommand\eee{\end{equation*}}
\newcommand\ben{\begin{enumerate}}
\newcommand\een{\end{enumerate}}
\def\Q{\ensuremath {{\bf Q}}}
\def\CC{\ensuremath {{\bf C}}}
\def\H{\ensuremath {{\mathcal H}}}
\def\I{\ensuremath {{\mathcal I}}}
\def\R{\ensuremath {{\bf R}}}
\def\A{\ensuremath {{\bf A}}}
\def\BB{\ensuremath {{\mathcal B}}}
\def\OO{\ensuremath {{\mathcal O}}}
\def\L{\ensuremath {\mathscr L}}
\def\Z{\ensuremath {{\bf Z}}}
\def\C{\ensuremath {\mathscr C}}
\def\T{\ensuremath {\mathcal T}}
\def\g{\ensuremath {\mathfrak g}}
\def\a{\ensuremath {\mathfrak a}}
\def\tr{\ensuremath {\mathrm{tr}}}
\def\GL{\ensuremath {\mathrm{GL}}}
\def\bs{\ensuremath {\backslash}}
\numberwithin{equation}{section}
\title{Refinements of the trace formula for GL(2)}
\author{Tian An Wong}
\email{\textcolor{blue}{\href{wongtianan@math.ubc.ca}{wongtianan@math.ubc.ca}}}
\address{The University of British Columbia, Vancouver, BC, Canada}
\keywords{Arthur-Selberg trace formula, Beyond Endoscopy, Poisson summation, basic function}
\subjclass[2010]{Primary 11F72, Secondary 11F68}
\date{\today}
\begin{document}

\begin{abstract}
We express the discrete noncuspidal terms in the spectral side of the trace formula for GL(2) in terms of orbital integrals, obtaining a geometric expansion for the cuspidal part of the trace formula. Assuming the Ramanujan conjecture for GL(2), this is equal to the tempered part of the trace formula, providing an alternate approach to the method of Frenkel-Langlands-Ng\^o in the context of Beyond Endoscopy. Using this, we establish a formula which in principle leads to an $r$-trace formula, and conclude with some remarks regarding the primitivization of the trace formula. 
\end{abstract}

\maketitle

%\tableofcontents \addtocontents{toc}{\protect\setcounter{tocdepth}{1}}

%Let $F$ be a number field. We also fix an induced central torus $Z\subset G$ over $F$ and a character $\zeta$ on $Z(\A)/Z(F)$. Fix a finite set of valuations $V$ of $F$ containing the places at which $G,Z$ and $\zeta$ are ramified. We write $G^Z_V$ for the subgroup of elements $x \in G_V= G(F_V)$ such that $H_G(x)$ lies in the image of $\a_Z$ in $\a_G$, and $\zeta_V$ for the restriction of $\zeta$ to $Z_V$. 

\section{Introduction}

Let $G$ be a reductive group over a number field $F$, and $\A$ the adele ring of $F$. The trace formula of $G$ is an identity of distributions
\[
J_\text{spec}(f) = J_\text{geom}(f),\qquad f\in C^\infty_c(G(\A))
\]
where on the spectral side the main terms are characters of irreducible automorphic representations of $G$, 
\[
\sum_{\pi\in\Pi(G)} a^G(\pi)f_G(\pi),
\]
with $a^G(\pi)$ being the multiplicity of $\pi$ in the  $L^2$-discrete spectrum of $G$ and $f_G(\pi)=\tr(\pi(f))$ is the character $\pi$, while on the geometric side the main terms are orbital integrals 
\[
\sum_{\gamma\in\Gamma(G)}a^G(\gamma) f_G(\gamma) 
\]
where $a^G(\gamma)$ is a certain global geometric coefficient, and $f_G(\gamma)$ is the normalized orbital integral of $f$ at the conjugacy class $\gamma$,
\[
f_G(\gamma) = |D(\gamma)|^{1/2}\int_{G_\gamma(\A)\bs G(\A)} f(x^{-1}\gamma x)dx,
\]
where $D(\gamma)$ is the discriminant of $\gamma$ and $G_\gamma$ is the centralizer of $\gamma$ in $G$. If $G = \GL(1)$, this reduces to the usual Poisson summation formula.   In \cite{FLN} it was proposed that an additive Poisson summation formula could be applied to the regular elliptic terms in $J_\text{geom}(f)$, in order to remove the contribution of the nontempered spectrum to $J_\text{spec}(f)$. This was carried out in a special case for $\GL(2)$ by Altu\u{g} \cite{Alt1}, furthering the preliminary analysis carried out by Langlands \cite{BE}.

We present here a different approach to the problem, by applying instead a muliplticative Poisson summation formula to the discrete, noncuspidal terms on the spectral side of trace formula for $G=\GL(2)$. This provides then a geometric expansion for the cuspidal spectral terms, modulo the continuous terms which are generally simpler to address. It may be helpful to note that there is some precedent for this: in making the trace formula for $\GL(2)$ invariant, Langlands applied a Poisson summation formula to the noninvariant terms on the geometric side, and combining with the noninvariant terms on the spectral side obtained invariant distributions \cite[\S10]{BC}. As pointed out by Arthur \cite[\S22]{Art}, the singularities of orbital integrals pose difficulties in generalizing Langlands' method to higher rank, and Arthur thus established the invariant trace formula for general $G$ by instead applying the summation formula to the noninvariant spectral terms. Returning to our setting, the singularities of orbital integrals continue to present difficulties, even for SL$_2$ \cite{ST} and $\GL(n)$ \cite{GKM}, thus it appears more amenable to manipulate the spectral distributions instead. 

Consider then the discrete part of the trace formula, which takes the form 
\[
I_\text{disc}(f) = \sum_{M}|W^M_0||W^G_0|^{-1}\sum_{s\in W(M)_\text{reg}}|\det(s-1)_{\a^G_M}|^{-1}\tr(M_P(s,0))\I_P(0,f))
\]
which involves the discrete spectrum and singular points in the continuous spectrum. Here $M$ runs over Levi subgroups of $G$ containing a fixed minimal Levi $M_0\subset G$, $W^G_0$ denotes the Weyl group of $G$, $M_P$ is the intertwining operator and $\I_P$ is the induced representation associated to the parabolic $P$ of $M$ respectively (see for example \cite[(1.5)]{problems}).  The cuspidal, tempered spectrum appears only in the term corresponding to $M=G$, written as
\[
I_\text{cusp}(f)  = \sum_{\pi\in\Pi_\text{cusp}(G)}a^G(\pi)f_G(\pi),
\]
which one would like to isolate. For $G=\GL(2)$, we obtain in Theorem \ref{temp} a geometric expansion for $I_\text{cusp}(f)$, minus the contribution of the continuous terms in the trace formula, that is, the difference $J_\text{cont}(f) = J_\text{spec}(f) - I_\text{disc}(f)$. In principle, our technique works seamlessly for dealing with the terms associated to $M\neq G$, whereas for $M=G$ we have needed $f$ to be spherical, thus warranting further consideration.

In the framework presented in \cite{problems}, the next step is then to establish an $r$-trace formula $I^r_\text{cusp}(f)$, where the spectral side of $I_\text{cusp}(f)$ is weighted with coefficients $m_r(\pi)$ which vanish if and only if the corresponding automorphic $L$-function $L(s,\pi,r)$ has a pole at $s=1$ for a fixed representation $r$ of the $L$-group $^LG$. With this in mind, we establish in Theorem \ref{trace} a weighted trace formula using the basic functions $f^r_s$ which satisfy 
$$\tr(\pi(f^r_s)) = L^V(s,\pi,r)\tr(\pi(f_V))$$
for automorphic representations $\pi$ unramified outside the finite set of valuations $V$ of $F$, and here $L^V(s,\pi,r)$ is the incomplete $L$-function \cite{ngosum}. In particular, we obtain a geometric expression whose meromorphic continuation to $s=1$ would establish the existence of the desired distribution $I^r_\text{cusp}(f)$. For technical reasons, we have elected to use the noninvariant trace formula, whereas in general this analysis should be carried out for the stable trace formula $S_\text{cusp}(f)$, to produce an $r$-stable trace formula $S^r_\text{cusp}(f)$. 

Finally, in Section \ref{prim} we make some remarks on the putative primitive trace formula $P^G_\text{cusp}(f)$, which one hopes to establish with the help of the $r$-stable trace formula. We note that \cite{Mok} has worked out special cases of the $r$-stable trace formula, and further obtained a weak form of the primitization of the the trace formula for odd orthogonal groups, using known results about the relevant $L$-functions and endoscopic classification, bypassing analysis of the geometric side, which will ultimately be needed in the general case.

%The goal is to apply a (multiplicative) Poisson summation formula to the noncuspidal discrete terms in the trace formula of $\GL(2)$, thus providing a geometric expansion for the cuspidal spectral terms of the trace formula. With this in hand, and $r$-trace formula is then written down and studied.

%The basic goal of this paper is to carry out the Poisson summation formula envisioned in \cite{FLN} in the context of Beyond Endoscopy \cite{BE} for the case of GL$_n$, with an emphasis on $\GL(2)$, inspired by certain ideas of \cite{Alt1}. The summation formula is to be applied to the regular elliptic terms in the stable trace formula over the space of characteristic polynomials; the problem of stability presents numerous technicalities which obscure the simple solution to the problem that we present here, and are treated in the forthcoming work \cite{wong}. Working from the standpoint of Arthur's trace formula, we find a simplification in the global geometric coefficient, and then define a smooth extension of the elliptic sum over the linear part of the Steinberg-Hitchin base which addresses the singularities of the orbital integrals involved. The Poisson summation formula that we prove here allows one to study unconditionally Arthur's orbit stratification related to characteristic polynomials \cite{artstrat}. 

\section{A tempered trace formula }

We shall use the formulation of the noninvariant trace formula for $\GL(2)$ given in \cite[\S6-7]{GJ}, which we refer the reader to for details. Let $G=\mathrm{GL}(2)$ over $F$ a number field, and $\bar{G}=G/Z\simeq\mathrm{PGL}(2)$, where $Z(F)\simeq F^\times$ is the centre of $G$. Fix a central character $\omega$ of $Z$.  Given an $\omega$-equivariant test function $f\in C_c^\infty(G(\A),\omega)$, we express the discrete part of the trace formula $I_\text{disc}(f)$ as the sum of 
\[
I_\mathrm{cusp}(f) = \sum_{\pi\in\Pi_\mathrm{cusp}(G)}f_G(\pi)
\]
where $\Pi_\text{cusp}(G)$ is the set of cuspidal automorphic representations of $G$, and 
\begin{align}
&+ \frac{1}{\text{vol}([\bar{G}])}\sum_{\chi^2=\omega}\int_{\bar{G}(\A)}f(x)\bar\chi(\det(x))dx \label{one}\\
&+ \frac{1}{4}\sum_{\chi^2=\omega}\tr(M(\chi,\chi)\pi_{(\chi,\chi)}(f)) \label{res}
\end{align}
with $\text{vol}([\bar{G}])=\text{vol}(\bar{G}(F)\bs\bar{G}(\A))$, and $\chi$ runs over characters of the norm one ideles $(F^\times\bs \A^\times)^1$, or equivalently, one-dimensional representations of $\bar{G}(\A)$ which necessarily factor through the determinant.

The spectral sum is  equal to the sum of orbital integrals
\[
J_\text{geom}(f) = \sum_{M\in \L}\sum_{\gamma\in\Gamma(M)}a^M(\gamma)J_M(\gamma,f)
\]
where $\L$ is the set of Levi subgroups of $G$, $a^M$ is the global geometric coefficient, $\Gamma(M)$ is the set of conjugacy classes of elements in $M$, and $J_M(\gamma,f)$ is the weighted orbital integral
\[
|D(\gamma)|^{1/2}\int_{G_\gamma(\A)\bs G(\A)}f(x^{-1}\gamma x)v_M(x)dx
\]
for a certain weight function $v(x)$, and the subtracted contribution from the continuous spectrum
\[
J_\mathrm{cont}(f) =  \frac{1}{4\pi}\sum_{\chi}\int_{-\infty}^\infty\tr(M(\eta)M'(\eta)\pi_{\eta}(f))d\eta,
\]
where $\eta=(\mu,\nu)$ is a  pair of quasi-characters of $F^\times\bs\A^\times$ such that $\mu\nu=\omega$, and $\pi_\eta$ is the principle series representation on $G$ induced from $\eta$, which is the space of functions $\varphi$ satisfying
\[
\varphi(\begin{pmatrix}t_1 & x \\ 0 & t_2\end{pmatrix}g)= \chi(t_1t_2)|t_1t_2^{-1}|\varphi(g),
\]
and $M(\eta)$ is the intertwining operator associated to $\eta$ and $\tilde{\eta}$. We will be interested in the case $(\mu,\nu) = (\chi\alpha^\frac12,\chi\alpha^{-\frac12})$, where $\alpha(x) = |x|$, which correspond to the terms \eqref{one} and \eqref{res}. Also write 
\[
H(\begin{pmatrix} t_1 & 0 \\ 0& t_2\end{pmatrix})=|t_1t_2^{-1}|.
\]

What we would like is to isolate the cuspidal contribution on the spectral side. Assume for simplicity for the rest of this paper that $\omega$ is trivial. 
\begin{thm}
\label{temp}
Let $f$ be an element in the spherical Hecke algebra $\mathscr H(G,K)$, and define the geometric expansion
\[
\tilde{J}_\mathrm{geom}(f) := J_\mathrm{geom}(f)
-  \sum_{\gamma \in \bar{M}(F)}\Big(\frac{\mathrm{vol}(K)^2}{\mathrm{vol}([\bar{G}])}f(\gamma )  - \frac{1}{4}H(\gamma ) f_G(\gamma )\Big).
\]
Then 
\be
\label{tempt}
I_\mathrm{cusp}(f) = \tilde{J}_\mathrm{geom}(f)  - J_\mathrm{cont}(f).
\ee
\end{thm}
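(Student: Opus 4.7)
The plan is to combine the trace formula identity $J_{\text{spec}}(f)=J_{\text{geom}}(f)$ with an explicit evaluation, via multiplicative Poisson summation, of the two noncuspidal discrete spectral terms \eqref{one} and \eqref{res}. Denoting these contributions by $T_1(f)$ and $T_2(f)$ respectively, the spectral decomposition recalled before the theorem reads
\[
J_{\text{spec}}(f) = I_{\text{cusp}}(f) + T_1(f) + T_2(f) + J_{\text{cont}}(f),
\]
so the trace formula yields
\[
I_{\text{cusp}}(f) = J_{\text{geom}}(f) - J_{\text{cont}}(f) - T_1(f) - T_2(f).
\]
Comparing with the definition of $\tilde J_{\text{geom}}(f)$, the statement of the theorem reduces to the purely geometric identity
\[
T_1(f) + T_2(f) \;=\; \sum_{\gamma\in\bar{M}(F)}\Big(\tfrac{\mathrm{vol}(K)^2}{\mathrm{vol}([\bar{G}])}f(\gamma) - \tfrac14 H(\gamma) f_G(\gamma)\Big).
\]

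For $T_1(f)$, the spherical assumption on $f$ forces only unramified quadratic characters $\chi$ to contribute to the sum over $\chi^2=1$. Using the Iwasawa decomposition $G(\A)=N(\A)M(\A)K$, each integral $\int_{\bar{G}(\A)}f(x)\bar\chi(\det x)\,dx$ unfolds to a Mellin transform on the idele class group $F^\times\bs\A^\times$. Applying multiplicative Poisson summation on this group then converts the finite sum over unramified $\chi^2=1$ into the sum $\sum_{\gamma\in\bar{M}(F)}f(\gamma)$, with the factor $\mathrm{vol}(K)^2/\mathrm{vol}([\bar{G}])$ emerging from the Haar measure normalizations on $K$ and on $[\bar{G}]$.

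For $T_2(f)$, the spherical Plancherel formula identifies the action of $\pi_{(\chi,\chi)}(f)$ on the spherical vector with the Satake transform of $f$, while the intertwining operator $M(\chi,\chi)$ at the reducibility point is computed via the Gindikin-Karpelevich formula and yields, after summation, a factor built from the modulus character $H$. Summing over $\chi^2=1$ and applying multiplicative Poisson summation a second time converts the character sum into $-\tfrac14\sum_{\gamma\in\bar{M}(F)}H(\gamma)f_G(\gamma)$, the normalized orbital integral $f_G(\gamma)$ arising because the Fourier dual on the split torus of the kernel of $M(\chi,\chi)\pi_{(\chi,\chi)}$ is precisely the regular orbital integral of $f$ over $\bar{M}$. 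Summing the two contributions gives the required identity.

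The principal technical obstacle will be the second step: reconciling the global intertwining-operator residue with local spherical data so that the factor $H(\gamma)f_G(\gamma)$ emerges cleanly, and verifying that the coefficient $1/4$ (coming from the order of the Weyl group in the spectral expansion) transfers unchanged through Poisson summation into the coefficient of the weighted orbital integral. Careful Haar-measure bookkeeping among $\mathrm{vol}(K)$, $\mathrm{vol}([\bar{G}])$, and the measures on $M(\A)$ and $F^\times\bs\A^\times$ will be needed throughout, and the analysis will parallel the residual computations in \cite{GJ}.
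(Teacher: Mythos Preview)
Your plan is essentially the same as the paper's: reduce to computing $T_1(f)+T_2(f)$, then apply multiplicative Poisson summation on $\bar M$ to each term. The one substantive difference is that you anticipate the intertwining operator as the main technical obstacle, to be handled locally via Gindikin--Karpelevich; in fact the paper disposes of it globally in one line. Since $\chi^2=1$, the scalar $M(\chi,\chi)=L(0,\chi^2)/\bigl(L(1,\chi^2)\epsilon(0,\chi^2)\bigr)$ is simply $\lim_{s\to 1}\zeta(1-s)/\zeta(1+s)=-1$, so \eqref{res} becomes $-\tfrac14\sum_{\chi^2=1}\tr(\pi_{(\chi,\chi)}(f))$. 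The factor $H(\gamma)f_G(\gamma)$ then does not arise from the intertwining operator at all; it comes from the standard integral formula for the principal series trace, $\tr(\pi_{(\chi,\chi)}(f))=\int_{\bar M(\A)}H(a)f_G(a)\psi(a)\,da$, after identifying the $K\times N$ integral with the orbital integral via $G_a\bs G\simeq NK$. So your ``principal technical obstacle'' evaporates, and the $1/4$ and the minus sign pass through Poisson summation untouched.
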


\begin{proof}
Since $f$ belongs to the spherical Hecke algebra $\mathscr H(G,K)$, %we can view it as a function on $M$ instead by the Satake isomorphism.
by the Cartan decomposition and bi-$K$-invariance we have
\[
\int_{\bar{G}(\A)}f(x)\bar\chi(\det(x))dx = \text{vol}(K)^2\int_{\bar{M}(\A)}f(a)\bar\chi(\det(a))dx,
\]
where $\bar  M$ is the split Cartan in the Iwasawa decomposition 
\[
\bar{G}=N\bar MK = \begin{pmatrix}1&x\\0&1\end{pmatrix}\begin{pmatrix}t&0\\0&1\end{pmatrix}K
\]
which we identify with the minimal Levi of $G$. The one-dimensional characters factor through the determinant, and are trivial on $N$ and $K$. Notice that the one-dimensional automorphic characters of $\bar{M}(A)$ are exactly $\psi = \chi\circ\det$, so we can view the right hand side as the Fourier transform $\hat{f}(\psi)$.

More precisely, the one-dimensional automorphic characters of $G$ can be seen as the composition
\[
G(\A) \stackrel{\det}{\to} \A^\times \stackrel{\chi}{\to} \CC^\times.
\]
 Since $\chi^2=1$, this descends to a map\[
\bar{G}(\A) \stackrel{\det}{\to}  \A^\times/(\A^\times)^2\stackrel{\chi}{\to} \CC^\times
\] 
 and as a quasicharacter $\chi$ is trivial on $F^\times \R^\times_+$ and hence unitary. Recall that a character of $\A^\times\simeq (\A^\times)^1 \times \R^\times_+$ factors as a product of a unitary character and $|\cdot|^t$ for $t\in\R$.
The quadratic characters thus range over the characters of the quotient 
\[
D:= F^\times \bs (\A^\times)^1/( \A^\times)^2. %= (F^\times (\A^\times)^2 \bs \A^\times)^1.
\]
This is a compact abelian group. We therefore think of the one-dimensional characters $\chi\circ\det$ as running over the character group 
\[
\text{Hom}(\bar{G},{\bf G}_m)\simeq  \text{Hom}(\bar M,{\bf G}_m) \simeq D^*
\]
 %Note that the image of the determinant of $\bar{G}(F)$ is $F^\times/(F^\times)^2$. Denote this by $F^\times_2$. 
In particular, the character group is  abelian. Our Poisson summation will be concentrated on the locally compact abelian group $\bar M$, whence
\[
\sum_{\gamma\in \bar{M}(F)} f(\gamma) = \sum_{\psi\in D^*}\int_{\bar{M}(\A)}f(\gamma)\psi(\gamma)d\gamma.
\]
Thus \eqref{one} is equal to
\[
\frac{\text{vol}(K)^2}{\text{vol}([G])}\sum_{\gamma\in \bar{M}(F)}f(\gamma).
\]

On the other hand, the intertwining operator is the scalar factor 
\[
M(\chi,\chi) = \frac{L(0,\chi^2)}{L(1,\chi^2)\epsilon(0,\chi^2)}
\]
and since $\chi^2=1$, this is equal to 
\[
\lim_{s\to 1} \frac{\zeta(1-s)}{\zeta(1+s)}=-1,
\]
we can therefore write \eqref{res} as 
\be
\label{res2}
-\frac{1}{4}\sum_{\chi^2=1} \tr(\pi_{(\chi,\chi)}(f)).
\ee
We compute this trace as
%\text{vol}((F^\times\bs\A^\times)^1)\int_K \int_{N(\A)}\sum_{a\in F^\times }\int_{F^+_\infty}f(k^{-1}\begin{pmatrix}ta&0\\0&1\end{pmatrix}nk)|t|d^\times t\ dn \ dk
\[
\int_K \int_{N(\A)}\int_{\bar{M}(\A)}f(k^{-1} an k) \chi(\det(a))H(a)  da \ dn \ dk.
\]
The integrals converging absolutely, we may interchange the order of integration and define
\be
\label{Phi}
\Phi(a) = H(a)\int_K \int_{N(\A)}f(k^{-1} an k)  dn \ dk,
\ee
thus we can view $\tr(\pi_{(\chi,\chi)}(f))$ as a Fourier transform on $\bar M$ as before. Note that by a change of variables we can view this as an orbital integral since
\[
G_{a}(F_v)\bs G(F_v) = M(F_v)\bs P(F_v)K_v \simeq N(F_v) K,
\]
where $P$ is the parabolic containing defined by $N$, in other words
\[
\tr(\pi_{(\chi,\chi)}(f)) = \hat{\Phi}(\psi) = \int_{\bar{M}(\A)}H(a)f_G(a)\psi(a)da.
\]
Then applying the Poisson summation formula again \eqref{res2} is equal to 
\[
-\frac{1}{4}\sum_{\chi^2=1} \hat{\Phi}(\psi) = -\frac{1}{4}\sum_{\gamma \in\bar{M}(F)} \Phi(\gamma)
\]
as desired.
\end{proof}

We note that the unweighted orbital integrals $f_G(\gamma)$ that appear in $\tilde{J}_\text{geom}$ attached to $\gamma\in\bar{M}(F)$ do not occur in the original geometric expansion $J_\text{geom}(f)$, as the corresponding orbital integrals in the latter sum are weighted. We shall refer to \eqref{tempt} as a tempered trace formula, for assuming the Ramanujan conjecture all the terms appearing on either side are in fact tempered distributions.

%-\frac{\lambda_{-1}}{2}\sum_{u}\sum_{\alpha\neq 1 }\prod_{v\neq u}\int f_v(x^{-1}_v\begin{pmatrix}\alpha&0\\0&1\end{pmatrix}x_v)dx_v\int f_u(x^{-1}_u\begin{pmatrix}\alpha&0\\0&1\end{pmatrix}u_v)\mu_u(x_u)dx_u
%\mu(n_ua_uk_u) = \log H_u(wn_u).

\section{An $r$-trace formula}

We recall that the intention is to establish a certain $r$-trace formula in the sense of \cite{problems}, in which the spectral terms are weighted by a factor $m_r(\pi)$  pertaining to the order of the automorphic $L$-function $L(s,\pi,r)$ at $s=1$ for tempered automorphic representations $\pi$.  Let $V$ be a finite set of valuations of $F$ containing the archimedean places, and the places over which $G$ is ramified. Define the weighted spectral sum
\be
\label{Tr}
I^r_\mathrm{cusp}(f) = \sum_{\pi\in\Pi_\mathrm{cusp}(G)}m_r(\pi) f_{V,G}(\pi).
\ee
In the case of $G=\GL(2)$, the Ramanujan conjecture provides the expected description of the cuspidal tempered spectrum. Allowing this, the expression will be well-defined. There are two proposed definitions for the coefficient $m_r(\pi)$, which is as the order or residue of the pole of $L(s,\pi,r)$ at $s=1$. We shall compare the two with a view towards establishing $I^r_\text{cusp}(f)$ unconditionally. 

\subsection{First definition}

In \cite{BE} the weight factor was taken to be the order at $s=1$ of the unramified $L$-function of the tempered automorphic representation $\pi$ that is unramified outside of $V$,
\[
L^V(s,\pi,r) = \prod_{v\not\in V}\det(1-r(c(\pi_v))q_v^{-s})^{-1},
\]
where $q_v$ is the cardinality of the residue field of $F_v$, and $c(\pi_v)$ is the Satake parameter, or Frobenius-Hecke conjugacy class of $\pi_v$ in $\hat{G}(\CC)$. We may express $m_r(\pi)$ then as the residue of the logarithmic derivative 
\[
m_r(\pi) =  \text{Res}_{s=1}\frac{{L^V}'}{L^V}(s,\pi,r),
\]
Expressing it instead as the Mellin transform 
\[
\int_{\sigma-i\infty}^{\sigma+i\infty} \frac{{L^V}'}{L^V}(s,\pi,r) X^s ds
\]
for $\text{Re}(s)=\sigma$ large enough, and assuming that the only possible pole of $L(s,\pi,r)$ in the half plane Re$(s)\ge1$ is at the point $s=1$, $m_r(\pi)$ can be expressed by a usual contour-shift and an application of a Tauberian theorem as
\[
\lim_{N\to\infty}\frac{1}{|V_N|}\sum_{v\in V_N}\log(q_v)f^r_{v,G}(\pi)
\]
where $V_N = \{v\not\in V: q_v<  N\}$. The limit exists if $\pi$ is tempered, so \eqref{Tr} is well-defined. 

In order to make use of Theorem \ref{temp}, we require that $f_V$ belong to the spherical Hecke algebra $\H(G_V,K_V)$, and for all $v\not\in V$ we require that $f_v=f_v^r$ to be the element in the unramified spherical Hecke algebra such that by the Satake isomorphism
\[
f^r_{v,G}(\pi) = \tr(r(c_v(\pi)))
\]
for any unramified, smooth, irreducible representation of $G(F_v)$, and $f = f_V\prod_{v\not\in V} f_v^r$. To obtain a genuine trace formula, we would like a geometric expansion for it.

Assuming that the sums converge absolutely, we may interchange the sums to get
\begin{align*}
I^r_\mathrm{cusp}(f) &
=  \lim_{N\to\infty}\frac{1}{|V_N|}\sum_{v\in V_N}\log(q_v)\sum_{\pi\in\Pi_\mathrm{cusp}(G)}f^r_{v,G}(\pi)f_{V,G}(\pi)\\
&=  \lim_{N\to\infty}\frac{1}{|V_N|}\sum_{v\in V_N}\log(q_v)I_\mathrm{cusp}(f_Vf^r_v).
\end{align*}
Now we can apply Theorem \ref{temp}. What one would like to show is that the {\em individual} terms  on the geometric side converge to obtain a genuine geometric expression. Define
\[
I^r_\mathrm{geom}(f) =  \lim_{N\to\infty}\frac{1}{|V_N|}\sum_{v\in V_N}\log(q_v)I_\mathrm{geom}(f). 
\]
For the geometric expansion $I^r_\mathrm{geom}(f)$ to be well-defined, it suffices to show that the averaged sum of weighted orbital integrals 
\be
\label{lim}
\lim_{N\to\infty}\frac{1}{|V_N|}\sum_{v\in V_N}\log(q_v)J_M(\gamma,f_Vf_v^r)
\ee
converges. In \cite{W} the convergence of such a limit was studied, conditional upon certain estimates on the functions $f_v^r$ as $v$ varies. This appears to be a difficult problem in general, as the existence of such a limit is essentially equivalent to the analytic continuation of $L^V(s,\pi,r)$ to the half-plane Re$(s)>1$, and that after admitting the two assumptions above. 

\subsection{Second definition}

As pointed out in \cite{sarnak}, the alternative definition for the weighting factor
\[
m_r(\pi) = \text{Res}_{s=1}L^V(s,\pi,r)
\]
leads one to study sums of integers rather than sums of primes, which are more amenable to analytic number theoretic techniques. This was used in succesfully in \cite{Alt3} for the standard $L$-function of $G=\GL(2)$. It was also studied in \cite{FLN}, which led to the notion of the basic function $f^r_{v,s}$ over a nonarchimedean local field $F_v$, given by the expansion
\[
f^r_{v,s} = \sum_{n=0}^\infty f^{r,n}_{v,s},
\]
where $f^{r,n}_{v,s} \in \H(G_v,K_v)$ is the element of the Hecke algebra of $G_v = G(F_v)$ whose Satake transform is the invariant regular function on $\hat{G}$ given by 
$$
g \mapsto \tr(g,\text{Sym}^n(r)).
$$
 In our case, the sum is locally finite but has noncompact support in general. The key feature of the basic function is that for $\pi$ unramified over $F_v$ one has 
\[
\tr(\pi(f^r_{v,s}))= L_v(s,\pi,r),
\]
and vanishes if $\pi$ is ramified. The basic function belongs to a certain $r$-Schwartz space $\mathscr S^r(G(F_v))$ generalizing the usual Harish-Chandra Schwartz space on $G(F_v)$ \cite{Ngo}. The key observation here is that we may work unconditionally with $f^r_{v,s}$ at least for $\mathrm{Re}(s)$ large enough, rather than having to make assumptions about the analyticity of $L^V(s,\pi,r)$ as in the first definition. 

\begin{thm}
\label{trace}
Let $f_V\in \H(G_V,K_V)$, and set $f^r_s = f_V\prod_{v\not\in V}f^r_{v,s}$. Then for $\mathrm{Re}(s)$ large enough, for $G=\GL(2)$ we have that 
\[
I_\mathrm{cusp}(f^r_s)  = \sum_{\pi\in\Pi_\mathrm{cusp}(G)} L^V(s,\pi,r) \tr(\pi(f_V)) 
\]
is equal to 
\[
\tilde{J}_\mathrm{geom}(f^r_s) - J_\mathrm{cont}(f^r_s). 
\]
\end{thm}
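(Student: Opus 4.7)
The plan is to apply Theorem \ref{temp} to the test function $f^r_s$, after matching its spectral side with the claimed $L$-function weighted sum.

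First, I would establish the spectral identity. Since $f^r_{v,s}$ is bi-$K_v$-invariant at every $v\not\in V$, the trace $\tr(\pi(f^r_s))$ vanishes on any cuspidal automorphic representation $\pi$ that is ramified at some $v\not\in V$. For $\pi$ unramified outside $V$, multiplicativity of the trace together with the defining property $\tr(\pi_v(f^r_{v,s})) = L_v(s,\pi_v,r)$ of the basic function yield
$$\tr(\pi(f^r_s)) = \tr(\pi_V(f_V))\prod_{v\not\in V}L_v(s,\pi_v,r) = L^V(s,\pi,r)\,\tr(\pi_V(f_V)).$$
Summing over $\pi\in\Pi_\mathrm{cusp}(G)$ and using multiplicity one for $\GL(2)$ (so that $a^G(\pi)=1$) gives the first displayed identity, with absolute convergence for $\mathrm{Re}(s)$ sufficiently large.

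Next, I would invoke Theorem \ref{temp} directly with $f = f^r_s$. The proof of that theorem uses the test function only through (i) bi-$K$-invariance at unramified places, in the Cartan and Iwasawa decompositions, (ii) the identity $M(\chi,\chi) = -1$ for $\chi^2 = 1$, which does not involve the test function, and (iii) Poisson summation on the compact group $D = F^\times\bs(\A^\times)^1/(\A^\times)^2$, applied to the restriction of the test function to $\bar M(\A)$ and to the auxiliary function $\Phi$ defined in \eqref{Phi}. Each ingredient extends formally to $f^r_s$, so once convergence is in hand the theorem yields
$$I_\mathrm{cusp}(f^r_s) = \tilde{J}_\mathrm{geom}(f^r_s) - J_\mathrm{cont}(f^r_s),$$
which combined with the first step is the claim.

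The main obstacle is precisely this convergence: the basic function $f^r_{v,s}$ has noncompact support, so $f^r_s$ does not lie in $C_c^\infty(G(\A))$ but in the larger $r$-Schwartz space $\mathscr S^r(G(\A))$ of \cite{Ngo}. For $\mathrm{Re}(s)$ large, the local $L$-factors $L_v(s,\pi_v,r)$ form absolutely convergent Euler products, and one expects the spectral, continuous, and modified geometric sides of the trace formula to converge absolutely against $f^r_s$ simultaneously; this is what forces the restriction on $s$ in the statement. Verifying this term by term on $\tilde{J}_\mathrm{geom}(f^r_s)$, particularly for the unweighted orbital integrals attached to $\gamma\in\bar M(F)$ that enter the correction, is the delicate point. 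It is also what prevents one from immediately meromorphically continuing in $s$ to $s=1$, which would be the goal of establishing the unconditional $r$-trace formula.
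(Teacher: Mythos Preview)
Your overall strategy matches the paper's: compute the spectral side using the defining property of the basic function, then apply Theorem~\ref{temp} to $f^r_s$. The spectral identity in your first paragraph is correct and essentially what the paper does.

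The gap is in your treatment of convergence. You correctly identify that $f^r_s\notin C_c^\infty(G(\A))$ and that this is the delicate point, but you leave it at ``one expects'' convergence and propose verifying $\tilde{J}_\mathrm{geom}(f^r_s)$ term by term. The paper does not proceed this way. Instead, it invokes the extension of the noninvariant trace formula to the Fr\'echet space $\C(\bar{G})$ of Finis--Lapid--M\"uller: smooth, right-$K$-invariant functions on $\bar{G}(\A)$ lying in $L^1$ together with all derivatives. The paper observes that for $\mathrm{Re}(s)$ large enough, $\prod_{v\notin V}f^r_{v,s}\in\C(\bar{G})$, and then cites \cite[Corollary~1]{FLM} for absolute convergence of the spectral side and \cite[Corollary~7.2]{FL} for absolute convergence of the geometric side. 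This handles all terms at once and is what justifies applying Theorem~\ref{temp}. Without these references (or an equivalent argument), your proposal is a sketch rather than a proof; the $r$-Schwartz space of \cite{Ngo} is a local object and does not by itself give you the global trace formula.
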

\begin{proof}
We first recall the space of functions constructed in \cite{FLM} extending the usual space of test functions $C_c^\infty(\bar{G}(\A))$. For any compact open subgroup $K$ of $G(\A_f)$ the space $\bar{G}(\A)/K$ is a differentiable manifold. Any element $X\in \mathcal U(\bar{\mathfrak g})$, the universal enveloping algebra of the Lie algebra $\bar{\mathfrak g}$ of $\bar{G}(\R)$ defines a left-invariant differentiable operator $f*X$ on $\bar{G}(\A)/K$. Let $\C(\bar{G},K)$  be the space of 
smooth, right-$K_0$-invariant functions on $\bar{G}(\A)$ which belong to $L^1(\bar{G}(\A))$ together with all their derivatives. it is a Fr\'echet space under the seminorms
\[
||f*X||, \qquad X\in \mathcal U(\bar{\mathfrak g}).
\]
Denote by $\C(\bar{G})$ the union of $\C(\bar{G},K)$ as $K$ varies over open compact subgroups of $\bar{G}(\A_f)$, and endow $\C(\bar{G})$ with the inductive limit topology.

The function $\prod_{v\not\in V}f^r_{v,s}$ belongs to $\C(\bar{G})$ if Re($s$) is large enough. For finite nonarchimedean valuations $v\in V$, let $f_v \in L^1(G(F_v))$ and let $f_\infty$ be a smooth function on $G(F_\infty)$ where $F_\infty = \prod_{v|\infty}F_v$ such that
\[
||f_\infty * X||_{L^1(G(F_\infty))} 
\]
is finite for all $ X\in \mathcal U(\bar{\mathfrak g})$. Then by \cite[Corollary 1]{FLM} we may input $f^r_s$ into the usual trace formula to obtain the absolutely converging spectral expansion
\[
\sum_{\pi} \tr(\pi(f^r_s ))= \sum_\pi L^V(s,\pi,r) \tr(\pi(f_V))
\]
and \cite[Corollary 7.2]{FL} guarantees the absolute convergence of the corresponding geometric side. Here the sum $\pi$ runs over irreducible automorphic representations in the discrete spectrum of $G$ unramified outside $V$.

In order to apply Theorem \ref{temp}, we simply notice that $\H(G_V,K_V)$ embeds easily in $\C(\bar{G})$, and the desired formula
\[
I_\mathrm{cusp}(f^r_s) = \tilde{J}_\mathrm{geom}(f^r_s) - J_\mathrm{cont}(f^r_s )
\]
follows.
\end{proof}

\begin{rem}
While we could have written the dual expansion for $I_\mathrm{cusp}(f)$ as a sum of invariant distributions as in \cite{BC}, the results of \cite{FLM} and \cite{FL} are valid only for the noninvariant trace formula, hence we have chosen to work with the latter. In any case, the noninvariant terms on either side of the trace formula are not important to our present case.
\end{rem}

The $r$-trace formula in this context would then be the distribution obtained from as the residue
\[
I^r_\mathrm{cusp}(f_V) = \text{Res}_{s=1} I_\mathrm{cusp}(f^r_s ) = \sum_{\pi\in\Pi_\mathrm{cusp}(G)} m_r(\pi) \tr(\pi(f_V)).
\]
Once again, such an expression would require knowledge of the analytic continuation of relevant $L$-functions, amounting to the Ramanujan conjecture on average for $G$. But the advantage over the first method is that we have obtained a first approximation in Theorem \ref{trace}, and we are led to examine the analytic continuation of the geometric expansion $\tilde{J}_\mathrm{geom}(f^r_s f_V)$, and thus to the orbital integrals 
\[
J_M(\gamma,f^r_s) = |D(\gamma)|^{1/2}\int_{G_\gamma(\A)\bs G(\A)}f^r_s(x^{-1}\gamma x)v(x) dx,
\]
whose meromorphic continuation to $s=1$ is roughly equivalent to the existence of the limit \eqref{lim}. For Re($s$) large enough and fixed $\gamma$, the sum defining the basic function converges absolutely. It follows then that
\[
J_M(\gamma,f^r_s) = \sum_{n=0}^\infty J_M(\gamma,f^r_{n,s}).
\]
We observe that the structure of these orbital integrals resemble closely Igusa's local zeta functions. It may be that a deeper study of the connections between the two will yield the needed meromorphic continuation of the orbital integrals under consideration. Such a possible relation has been considered long ago by Langlands \cite{igusa}. 

In any case, we have the following expectation from Theorem \ref{trace}. 
\begin{prob}
Show that the distribution $\tilde{J}_\mathrm{geom}(f^r_s)$ has meromorphic continuation to the half-plane $\mathrm{Re}(s)\ge1$, with the only possible pole at $s=1$. 
\end{prob}

\noindent This  can be verified in some special cases. For example, if $r$ is the standard representation, then $f^r_s$ is simply the characteristic function of Mat$_2(\mathcal O_v)$ twisted by $|\det|^s$. In this case the unweighted orbital integrals are covered by the results of \cite{CD}, suggesting that the motivic approach may be promising, though we caution that in Igusa theory the functions generally under consideration are compactly supported, which is not the case for general $f^r_s$. 

\begin{rem}
It is important to note that even if $m_r(\pi)=0$ for all cuspidal tempered $\pi$, the integrals $J_M(\gamma,f^r_s)$ need not be holomorphic at $s=1$, as shown by the results of \cite{Alt3}. Moreover, the difference between the original expansion ${J}_\mathrm{geom}(f^r_s)$ and modified expansion $\tilde{J}_\mathrm{geom}(f^r_s)$ will be critical, so that it will be important to consider not simply the individual integrals $J_M(\gamma,f^r_s)$ but the entire distribution $\tilde{J}_\mathrm{geom}(f^r_s)$. 
\end{rem}

\section{On primitivity}
\label{prim}
From the $r$-trace formula, one would like a certain primitization of the stable trace formula of the form 
\[
S^r_\text{cusp}(f) = \sum_{G'}m_{G'}(r)\iota(G,G')\hat{P}^{\tilde{G}'}_\text{cusp}(f') 
\]
analogous to the stablization of the invariant trace formula. We shall be content with making only a few observations here. One of the problems inherent in attaining such a decomposition is that the sum is not well-defined. In the case of stabilization, the sum is taken over elliptic endoscopic data of $G$, whereas in \cite{problems} the proposed sums are taken over so-called elliptic beyond endoscopic data $(G',\mathscr G',\xi')$. Here $G'$ is a quasisplit group, $\mathscr G'$ is a split extension of the global Weil group $W_F$ by the dual group $\hat{G}'$ with an $L$-embedding $\xi':\mathscr G'\to {^LG}$. Also, let $\tilde{G}'$ be a split central extension of $G'$ by an induced torus $\tilde{C}'$ over $F$, and $\tilde{\xi}':{\mathscr G}'\to {^L\tilde{G}'}$ an $L$-embedding. The additional coefficient $m_{G'}(r)$ is intended to be the dimension datum of $G'$ at $r$, the multiplicity of the trivial representation of $\mathscr G'$ in the composition $r\circ\xi'$. It is now known that these data together are not sufficient to rigidify the problem of isolating the functorial source or sources of a given representation $\pi$ of $G$ for which $L(s,\pi,r)$ has a pole at $s=1$ \cite{AYY}. 

The linear forms $P^G(f)$ are to be defined inductively as
\[
P^G_\text{cusp}(f) = S_\text{cusp}^G(f) - \sum_{G'\neq G}\iota(G,G')\hat{P}^{\tilde{G}'}_\text{cusp}(f'),
\]
and are primitive in the sense that $P^G_\text{cusp}(f)$ should be the spectral contribution of cuspidal tempered automorphic representations that are not functorial transfers from a smaller group.  In the absence of functoriality, we shall opt for an alternate, but what should be ultimately equivalent, definition, calling $\pi$ primitive if $L(s,\pi,r)$ does not have a pole at $s=1$ for any nontrivial representation $r$ of $^LG$. 

\subsection{} The definition of $P^G_\text{cusp}(f)$ involves the test function $f'$, which is the stable transfer of the test function $f$ from $G$ to $G'$. It is the image of $f$ under the mapping defined by
\[
f'(\phi') = f^G(\tilde{\xi}'\circ\phi')
\]
where $\phi'$ ranges over bounded $\tilde{\eta}_V'$-equivariant Langlands parameters of $\tilde{G}_V$, where $\tilde{\eta}'$ is a character of $\tilde{C}'(F)\bs \tilde{C}'(\A)$. Recall that in the case of stability, the Langlands-Shelstad transfer is given by
\[
f'_v(\delta_v') = \sum_{\gamma_v\in \Gamma_\text{reg}(G_v)}\Delta(\delta_v',\gamma_v) f_v(\gamma_v) 
\]
where $\Delta(\delta_v',\gamma_v)$ is the transfer factor and $f'_v(\delta_v')$ is a stable distribution, and $\delta_v$ is a strongly regular stable conjugacy class of $\tilde{G}'_v$. The key observation here is that the notion of $L$-indistinguishability is determined by stability \cite{LL}, which can be interpreted on both sides of the trace formula. Is there a geometric condition for primitivity that can be used to characterize the distributions in the desired geometric expansion for $P^G_\text{cusp}(f)$? Certainly a deeper study of the stable-stable transfer in \cite{ST} and \cite{shelstad} will be important. 

\subsection{} We mention here a possible first approximation. With regards to the spectral side, in \cite{CFM} it was shown Arthur packets can be obtained from perverse sheaves on the variety $V_\lambda\simeq \mathbb A^d$ of Langlands parameters with a prescribed infinitesimal character $\lambda$. The action of $Z_{\hat{G}}(\lambda)$ gives a stratification of $V_\lambda$ into a finite disjoint union
\[
V_\lambda = \coprod_i C_i
\]
with the unique open orbit $C_0$ corresponding to the tempered representations. On the other hand, on the geometric side, Arthur has constructed a stratification of the space of characteristic polynomials, parametrizing conjugacy classes on $\GL_{n+1}$, or more generally the Steinberg-Hitchin base for general $G$,
\[
\Xi(n)  = \coprod_{m+1|n+1}\Xi(m,n)
\]
where the open subset $\Xi(n,n)$ corresponds to the cuspidal tempered spectrum classified by Moeglin and Waldspurger \cite{artstrat}. Deeper study of this open stratum and its relation to the characterization of \cite{CFM} may lead to insight into the problem of primitivity. we note here that both stratifications exist for a general reductive group. 

For example, in the case of $G=\GL_4$, we have the parallel stratifications 
\[
V_\lambda = C_0 \sqcup C_1 \sqcup C_2
\]
and
\[
\Xi(3) = \Xi(3,3) \sqcup \Xi(1,3)\sqcup \Xi(0,3),
\]
where the open subset $C_0$ corresponds to the open stratum $\Xi(3,3)$.

\subsection{} Finally, we note that the existence of a pole at $s=1$ has been proved in certain cases through integral representations of $L(s,\pi,r)$ such as in \cite{GJS}, relating it to the nonvanishing of certain global periods, and locally is related to the nonvanishing of certain Hom spaces. This suggests that techniques related to the Gan-Gross-Prasad conjectures may be of use in this regard, and the connection to spherical varieties \cite{yiannis}. 

\subsubsection*{Acknowledgments} I thank Julia Gordon for encouragement and comments on a preliminary version of this paper, and Clifton Cunningham for enlightening discussions concerning his work.

\bibliography{BESL2}
\bibliographystyle{alpha}
\end{document}